\documentclass{amsart}
\usepackage{amsmath,amsthm,amssymb}
\usepackage{graphicx}
\usepackage{tikz-cd}
\usepackage{paralist}
\usepackage{environ}
\usepackage{xcolor}
\usepackage{hyperref}
\usepackage{centernot}
\usepackage{mathtools}
\usepackage{marvosym}
\usepackage{xcolor,cancel}

\usepackage{verbatim}
\usepackage{csquotes}
\usepackage{pstricks}
\usepackage{mdframed}

\makeatletter
\def\square{\pst@object{square}}
\def\square@i(#1,#2)#3{{\use@par\solid@star\psframe[origin={#1,#2}](#3,#3)}}
\makeatother


\makeatletter
\DeclareFontFamily{U}{tipa}{}
\DeclareFontShape{U}{tipa}{bx}{n}{<->tipabx10}{}
\newcommand{\arc@char}{{\usefont{U}{tipa}{bx}{n}\symbol{62}}}%

\newcommand{\arc}[1]{\mathpalette\arc@arc{#1}}

\newcommand{\arc@arc}[2]{%
	\sbox0{$\m@th#1#2$}%
	\vbox{
		\hbox{\resizebox{\wd0}{\height}{\arc@char}}
		\nointerlineskip
		\box0
	}%
}
\makeatother

\makeatletter
\newcommand{\doublewedge}{\big@doubleop{\wedge}}
\newcommand{\big@doubleop}[1]{%
	\DOTSB\mathop{\mathpalette\big@doubleop@aux{#1}}\slimits@
}

\newcommand\big@doubleop@aux[2]{%
	\sbox\z@{$\m@th#1#2$}%
	\makebox[1.35\wd\z@][s]{$\m@th#1#2\hss#2$}%
}

\newcommand{\dnear}{\delta_{\Phi}} 

\usepackage{subfigure}
\renewcommand{\thesubfigure}{\thefigure.\arabic{subfigure}}
\makeatletter
\renewcommand{\p@subfigure}{}
\renewcommand{\@thesubfigure}{\thesubfigure:\hskip\subfiglabelskip}
\makeatother

\theoremstyle{plain}
\newtheorem{theorem}{Theorem}
\newtheorem{lemma}{Lemma}

\newtheorem{definition}{Definition}

\newtheorem{example}{Example}
\newtheorem{corollary}{Corollary}
\newtheorem{proposition}{Proposition}

\usepackage{cancel}
\newcommand{\near}{\delta} 
\usepackage[title]{appendix}

\begin{document}
	\title{Approximately Prime Rings and Prime Ideals}
	\author{Maram Almahariq}
	\address{Department of Mathematics, Birzeit university, Ramallah, Palestine,
	}
	\email{maram.mahareeq.14@gmail.com}
	
	\author[J.F. Peters]{J.F. Peters}
	\address{
		Department of Electrical and Computer Engineering,
		University of Manitoba, WPG, Manitoba, R3T 5V6, Canada and
		Department of Mathematics, Faculty of Arts and Sciences, Ad\.{i}yaman University, 02040 Ad\.{i}yaman, Turkey,
	}
	\email{james.peters3@umanitoba.ca}
	\thanks{The research has been supported by the Natural Sciences \&
		Engineering Research Council of Canada (NSERC) discovery grant 185986 
		and Instituto Nazionale di Alta Matematica (INdAM) Francesco Severi, Gruppo Nazionale per le Strutture Algebriche, Geometriche e Loro Applicazioni grant 9 920160 000362, n.prot U 2016/000036 and Scientific and Technological Research Council of Turkey (T\"{U}B\.{I}TAK) Scientific Human
		Resources Development (BIDEB) under grant no: 2221-1059B211301223.}
	
	\author[T. Vergili]{T. Vergili}
	\address{
		Department of Mathematics, Karadeniz Technical University, Trabzon, Turkey,
	}
	\email{tane.vergili@ktu.edu.tr}
	\subjclass[2010]{08A05; 54E05}
	\date{}

\begin{abstract}
This article focuses on approximately prime rings and approximately prime ideals in proximal relator spaces, especially in descriptive proximity spaces. In particular, we define some binary operations, including the product of two approximately prime ideals and the direct product of approximately prime rings, and study the approximately principle prime ideals. Moreover, we introduce some fundamental properties of these approximately algebraic structures.
\end{abstract}
	\keywords{proximity space, relator space, approximately prime ring}
	\maketitle
	\tableofcontents

{
\section{Introduction}
The concept of prime rings and prime ideals has played an important role in the theory of abstract algebra, mainly in commutative rings. Briefly, a {\bf commutative ring}~\cite[\S III.3, p. 66]{Birkhoff1965} closed under two binary operations, commutative and associative operations such that
\begin{compactenum}[1$^o$]
\item Multiplication is distributive over addition.
\item An additive identity and additive inverse exist.
\end{compactenum}
In this paper we aim to obtain algebraic structures in proximal relator spaces which are, for $X$ is a non-empty set and $\mathcal{R}$ is a family of proximity relations on $X$, then $(X,\mathcal{R}_{\delta})$ is called a proximal relator space where $R_{\delta} $ contains proximity relations \cite{Peters2016relator}, using the definition of approximately rings and approximately ideals defined by Inan \cite{Inan:2017}. \newline
Essentially, the motivation of this article is to define the proximately principle prime ideals, the product of two approximately prime ideals, the approximately direct product of prime rings, and we investigate consequences of the results for these structures.  

}
	
\section{Background}
 This section introduces 
 For a non-empty set $X$ and subsets $A, B\subseteq X$, we say that $A$ is near $B$, provided $A,B$ have one or more points in common or they contain one or more points that are sufficiently close to each other \cite{Peters2016ComputationalProximity} (see, also,~\cite{PetersGuadagni2016}).   A non-empty set $X$ together with the relation $\delta$ is a \v{C}ech proximity space, denoted by $(X,\delta)$, provided the following axioms are satisfied \cite{Cech1966}.

\begin{description}
  \item[({\bf P}.0)]  $A \not{\near}\ \emptyset $, for all $A \subseteq X$. 
  \item[({\bf P}.1)] $A \near B$ if and only if $B \near A$
   \item[({\bf P}.2)] $A \cap B \neq \emptyset$ implies $A \near B$.
   \item[({\bf P}.3)] $A \near (B\cup C) $ if and only if $A \near B$ or $ A \near C$.
\end{description}

Efremovi\c{c} \cite{Efremovic1952} added an axiom called the (Efremovi\v{c} Axiom) to the \v{C}ech axioms, namely, 
 \[	
 A \not{\near}\ B, \ \exists C \subseteq X \ \text{such that} \ A \not{\near}\ C \ \text{and} \ C^{c} \not{\near}\ B.
 \]
	
An incisive introduction to Efremovi\c{c} proximities is given by C. Guadagni~\cite{Guadagni2015}. For an introduction to the geometry and topology of strong proximities, see~\cite{PetersGuadagni2016}.
Lodato's \cite{Lodato1962} proximity shifts the Efremovic axiom to 
	\[
\text{If}	 \ A \ \delta \ B\ \text{and}\ \forall b\in B, \; \{b\} \ \delta \ C \; \text{then} \; A \ \delta \ C.
	\]
	
Let $\phi:X \to \mathbb{R}$ be a map which represents the properties of a point in set $X$ such as density, color or texture \cite{Peters2013mcsintro,Peters2016ComputationalProximity}. A probe map $\Phi:X \to \mathbb{R}^n$ is a collection of maps $(\phi_1, \phi_2, \ldots, \phi_n)$ with $\phi_i: X \to \mathbb{R}$ for $i \in \{1,2,\ldots,n\}$. For $p \in X$, the vector  $\Phi(p)=(\phi_1 (p), \phi_2 (p),\cdots, \phi_n (p))$ is called a feature vector of $p$. For $A \subseteq X$, $\Phi(A)=(\phi_1 (A), \phi_2 (A), …, \phi_n (A))$ is called a feature vector of $A$. Let $p, p^\prime  \in X$. We say that $\{p\}$ is descriptively near to $\{p^\prime \}$, denoted by $\{p\}  \ \delta_\Phi \ p^\prime$, iff $\Phi(p)= \Phi(p^\prime)$ and  \cite{Peters2016ComputationalProximity}.
	
	\[ A\mathrel{\delta_\Phi}B \Leftrightarrow \Phi(A)\cap\Phi(B)\neq\emptyset.\] 
	Let $(X,\delta_{\Phi})$ be a descriptive proximity space. The descriptive intersection of non-empty subsets $A$ and $B$, denoted by $A \mathop{\cap}\limits_{\Phi} B \neq \emptyset$, is defined as \cite{DiConcilio2018MCSdescriptiveProximities}
	
	\[A \mathop{\cap}\limits_{\Phi} B = \{x\in A \cup B :  \Phi(x) \in \Phi(A) \cap \Phi(B)\}.\]
	
\begin{definition}  {\bf (Descriptive Proximity Space)}
\cite{DiConcilio2018MCSdescriptiveProximities} \\
	Let $X$ be a non-empty set equipped with the relation $\dnear$. $X$ is a descriptive proximity space if and only if the following descriptive forms of the \v{C}ech axioms are satisfied, for all non-empty subsets $A$, $B$, and $C$ of $X$.		
\begin{description}
	\item[({\bf DP}.0)]  $A \not\dnear \ \emptyset$, for all $A \subseteq X$. 
	\item[({\bf DP}.1)] $A \ \dnear \ B $ if and only if $B \ \dnear \ A$
	\item[({\bf DP}.2)] $A  \mathop{\cap}\limits_{\Phi} B \neq \emptyset$ if and only if $A \  \dnear \ B$.
	\item[({\bf DP}.3)] $A \ \dnear (B\cup C) $ if and only if $A  \ \dnear \ B$ or $A \ \dnear \ C$. \qquad \textcolor{blue}{$\blacksquare$}
\end{description} 
\end{definition} 
	
\begin{definition} \cite{Peters2016relator} 
	Let $X$ be a non-empty set and $\mathcal{R}$ be a family of proximity relations on $X$, then $(X,\mathcal{R}_{\delta})$ is called a proximal relator space where $R_{\delta} $ contains proximity relations, for example \v{C}ech proximity, Efremovi\v{c} proximity, Lodato proximity, or descriptive proximity.    \qquad \textcolor{blue}{$\blacksquare$}
\end{definition}
	
In this paper, we concentrate on descriptive proximity relations.
\begin{definition} {\bf (Descriptively upper approximation of a set)} \cite{Inan:2017} \\ 
	Let $(X,\mathcal{R}_{\dnear})$ be a descriptive proximal relator space and $A \subseteq X$. A descriptively upper approximation of $A$ is defined by
	\[		
	\Phi^{*}A = \{ x \in X \ \ : \ \  \{x\} \ \dnear \ A  \}.
	\]  \qquad \textcolor{blue}{$\blacksquare$}
\end{definition}
	
\begin{definition} {\bf (Approximately groupoid)} \cite{Inan:2017} \\
  For a descriptive proximal relator space $(X,R_{\delta_\phi})$. Let \enquote{$\cdot$}  be a binary operation on $X$ and $G \subseteq X$ is called an approximately groupoid, provided for all $x,y \in G$,  $x \cdot y \in \Phi^{*}G$.  \qquad \textcolor{blue}{$\blacksquare$}
\end{definition}
	
\begin{definition} {\bf (Approximately group)}
  \cite{Inan:2019} \\  Given a descriptive proximal relator space $(X,R_{\delta_\phi})$ let  \enquote{$\cdot$} be a binary operation on $X$. A subset $G \subseteq X$ is called an approximately group, if the following conditions are satisfied. \\
  \begin{description}
	\item[(${\bf AG}_1$)] \boxed{x\cdot y \in \Phi^{*}G} for all $x,y \in G$
	\item[(${\bf AG}_2$)] The property \boxed{(x\cdot y)\cdot z = x\cdot(y\cdot z)} holds in $\Phi^{*}G$ for all $x,y,z \in G$.
	\item[(${\bf AG}_3$)] There exists $e \in \Phi^{*}G$ such that \boxed{x\cdot e=e\cdot x=x} for all $x \in G$. 	
	\item[(${\bf AG}_4$)] For all $x\in G$, there exists $y \in G$ such that \boxed{x\cdot y= y\cdot x =e.}
\end{description}
		\qquad \textcolor{blue}{$\blacksquare$}
\end{definition}
	
Here, we say that $G$ is approximately semigroup \cite{Inan:2017},  if it satisfies only (${\bf AG}_1$) and (${\bf AG}_2$).  	

\begin{definition} {\bf (Approximately ring)} \cite{Inan2019TJMdescriptiveProximity} \\ 
Let $(X,\mathcal{R}_{\dnear})$ be a descriptive proximal relator space and let \enquote{+} and \enquote{$\cdot$} be binary operations on $X$. $R \subseteq X$ is called an approximately ring, provided  the following conditions are satisfied.
\begin{enumerate}
	\item[$({\bf AR}_1)$] $R$ is an abelian approximately group with \enquote{+}.
	\item[$({\bf AR}_2)$] $R$ is an  approximately semigroup with \enquote{$\cdot$}.
	\item[$({\bf AR}_3)$] For all $x,y,z \in R$, \boxed{x\cdot(y+z) =(x\cdot y)+(x\cdot z)$ and $(x+y)\cdot z = (x\cdot z)+(y\cdot z)}	hold in $\Phi^{*}R$.
\end{enumerate}
In addition,
\begin{enumerate}
  \item[$({\bf AR}_4)$] If \boxed{x\cdot y = y\cdot x} for all $x,y \in R$, then $R$ is called a commutative approximately ring. 
  \item[$({\bf AR}_5)$] If $\Phi^{*}R$ contains an element $1_R$ such that \boxed{1_R \cdot x = x \cdot 1_R = x} for all $x \in R$, then $R$ is called an approximately ring with unity (identity). \qquad \textcolor{blue}{$\blacksquare$}
\end{enumerate} 
\end{definition}

\begin{definition} {\bf (Invertible element)} \cite{Inan2019TJMdescriptiveProximity} \\ 
An element $x$ in an approximately ring $R$ is called left (resp. right) invertible, provided there exists $y \in R$ (resp, $z \in R)$ such that $y \cdot x = 1_{R}$ (resp, $x \cdot z = 1_R$. The element $y$ (resp. $z$) is called a left (resp. right) inverse of $x$. If $x \in R$ is both left and right invertible, then $x$ is called approximately invertible or approximately unit.    \qquad \textcolor{blue}{$\blacksquare$}
\end{definition}

\begin{definition} {\bf (Approximately Subring) \cite{Inan2019TJMdescriptiveProximity}} \\
	Let $(X,R_{\delta_\phi})$ be a descriptive proximal relator space and $R \subseteq X$ be an approximately ring. A non-empty subset $S$ of R is called an approximately subring, provided $S$ is an approximately ring with binary operations 	\enquote{+} and \enquote{$\cdot$} on approximately ring $R$.  \qquad \textcolor{blue}{$\blacksquare$}
\end{definition}

\begin{definition} {\bf (Approximately Field)}
  \cite{Inan2019TJMdescriptiveProximity} \\ An approximately ring $R$ is an approximately field, provided $({R}\setminus\{0\}, \cdot)$ is a commutative approximately group.  \qquad \textcolor{blue}{$\blacksquare$}
\end{definition}
	
\begin{definition} {\bf (Approximately Subfield)} \cite{Inan2019TJMdescriptiveProximity} \\
  Let $R$ be an approximately field and $S \subseteq R$. $S$ is called an approximately subfield of $R$, if $S$ is an approximately field.     \qquad \textcolor{blue}{$\blacksquare$}
\end{definition}

\begin{definition}  {\bf (Approximately Ideal)} \cite{Inan2019TJMdescriptiveProximity} \\ 
	Let $(X,\mathcal{R}_{\delta_\Phi})$ be a descriptive relator space, $R \subseteq X$ be an approximately ring and $I \subseteq R$. $I$ is a left (right) approximately ideal of $R$ if and only if for all $x , y \in I$ and for all $r \in R$, $x + y \in \Phi^{*} I$,$-x \in I$ and $r \cdot x \in \Phi^{*} I$ $(x \cdot r \in \Phi^{*}I)$. If $I$ is both left and right ideal, then $I$ is called an approximately ideal. \qquad \textcolor{blue}{$\blacksquare$}
\end{definition}

\begin{definition} \cite{Inan2019TJMdescriptiveProximity} Consider the set 
  \[R/_\rho S = \{ x + S \ \ : \ \  x \in R \}\]
  where $\rho$ is an equivalence relation. Then, $R/_\rho S$ is called an approximately ring of all approximately cosets of $R$ by $S$. \newline
  In addition, the descriptively upper approximation of $R/_\rho S$ is defined by,
   \[(\Phi^{*} R)/_\rho S = \{ x + S | x \in \Phi^{*} R \}.\]
\end{definition}

\section{Main results}
	
\begin{definition}  {\bf (Approximately Prime Ideal)} \\
 The approximately ideal $P$ in approximately ring $R$ is approximately prime ideal, provided for $a, b \in$ $R$, $ab \in \Phi^{*}P$, implies $a \in P$ or $b \in P$. 
\end{definition} 
	 
\begin{example}
 Let $I$ represent a digital image with $\delta_\Phi$, which consists of 16 pixels as illustrated in Figure~\ref{fig} in \cite{Inan2019TJMdescriptiveProximity}. Each pixel, labeled $x_{ij}$, is positioned at the coordinates $(i,j)$. The function $\Phi$ is a probe map that assigns each pixel to its corresponding $RGB$ (Red,Green, Blue) values, as indicated in the table.
\end{example}

\begin{figure}
	\centering
	\begin{subfigure}
		\centering
		\includegraphics[width=0.3\linewidth]{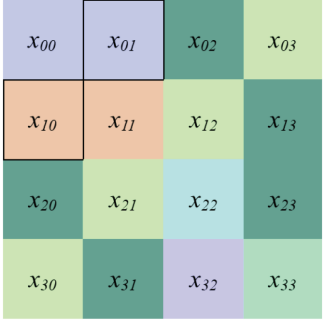}
	\end{subfigure}
	\begin{subfigure}
		\centering
		\includegraphics[width=0.45\linewidth]{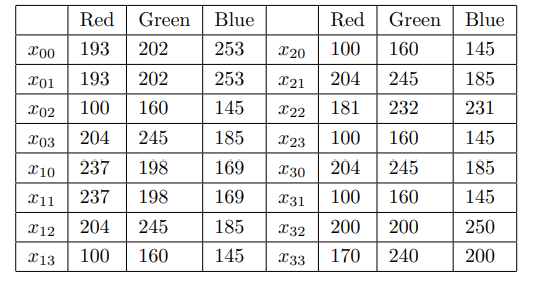}
	\end{subfigure}
	\caption{Digital Image $I$ and subimage $R_1$ together with RGB-value table \cite{Inan2019TJMdescriptiveProximity}.}
	\label{fig}
\end{figure}

Let $R_1 =\{x_{01},x_{10}\}$ and $\Phi^{*}R_1=\{x_{00},x_{01},x_{10},x_{11}\}$, then $R_1$ is an approximately ring under the following binary operations.
\begin{description}
	\item[Addition] The addition operation + isdefined as \[x_{ij} + x_{kl} = x_{nm},\]
	for $x_{ij}, x_{kl} \in R_1$, where $n \equiv i+k $ (mod 2), and $m \equiv j+l$ (mod 2).
	\item[Multiplication] 	The multiplication operation $*$ defined as
	\[x_{ij} * x_{kl} = x_{nm},\] for $x_{ij}, x_{kl} \in R_1$, where $n \equiv $ $min(i,k)$, and $m \equiv$ $min(j,l)$.
\end{description}
	Assume $I=\{x_{01}\}$ and $\Phi^{*} I =\{x_{00},x_{01}\}$, then $I$ is an approximately prime ideal. 
	However, if $R_2=\{x_{00},x_{01},x_{10},x_{11}\}$, then $\Phi^{*}R_2 = R_2$. Now consider $I = \{x_{01},x_{10}\}$ with $\Phi^{*} I = \Phi^{*} R_2$, notice that $x_{00}*x_{11} = x_{00} \in \Phi^{*}I$, but neither $x_{00}$ nor $x_{11}$ is element of $I$. Thus, $I$ is a non-approximately prime ideal.
	
\begin{definition}  {\bf (Approximately Integral Domain)} \\
	The non-zero approximately commutative ring $R$  is called an approximately integral domain if and only if the product of non-zero elements is non-zero (i.e., for $a$, $b$ $\in$ $R$
	and $ab$ $\in$ $\Phi^{*}$$R$ such that $ab=0$, then either $a=0$ or $b=0$). \qquad \textcolor{blue}{$\blacksquare$}
\end{definition}
	
\vspace*{0.3mm}
	
\begin{theorem}
	Let $I$ be an approximately ideal in an approximately ring $R$. If $I$ is an approximately prime ideal, then $R/I$ is an approximately integral domain. 
\end{theorem}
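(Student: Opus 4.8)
The plan is to verify, for $Q = R/I$ with additive identity $\bar 0 = 0+I = I$, the three requirements in the definition of an approximately integral domain: that $Q$ is a non-zero commutative approximately ring, and that $\bar a\cdot\bar b\in\Phi^{*}Q$ together with $\bar a\cdot\bar b = \bar 0$ forces $\bar a = \bar 0$ or $\bar b = \bar 0$.

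First I would invoke the construction of the approximately coset ring recalled above: $R/I = \{x+I : x\in R\}$ is an approximately ring under $(x+I)+(y+I)=(x+y)+I$ and $(x+I)\cdot(y+I)=(x\cdot y)+I$, with descriptively upper approximation $\Phi^{*}(R/I) = (\Phi^{*}R)/I = \{x+I : x\in\Phi^{*}R\}$. Commutativity of $R/I$ is inherited directly from that of $R$ — so the statement is to be read with $R$ a commutative approximately ring, which is in any case forced by the target notion — and $R/I$ is non-zero since an (approximately) prime ideal is proper, whence $I\neq R$ and $\bar 0 = I$ is not the only coset.

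The heart of the argument is the zero-divisor condition. Let $\bar a = a+I$ and $\bar b = b+I$ with $a,b\in R$, and suppose $\bar a\cdot\bar b\in\Phi^{*}(R/I)$ and $\bar a\cdot\bar b = \bar 0$. Since $R$ is an approximately semigroup under ``$\cdot$'', $a\cdot b\in\Phi^{*}R$, so $\bar a\cdot\bar b = (a\cdot b)+I$ indeed lies in $(\Phi^{*}R)/I = \Phi^{*}(R/I)$, consistent with the hypothesis; the equality $(a\cdot b)+I = I$ then gives $a\cdot b\in I$. Next I use the elementary inclusion $I\subseteq\Phi^{*}I$: for any $x\in I$ we have $x\in\{x\}\mathop{\cap}\limits_{\Phi} I$, hence $\{x\}\ \dnear\ I$ by (DP.2), i.e.\ $x\in\Phi^{*}I$. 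Therefore $a\cdot b\in\Phi^{*}I$, and the hypothesis that $I$ is an approximately prime ideal, applied to $a,b\in R$, yields $a\in I$ or $b\in I$, that is $\bar a = a+I = I = \bar 0$ or $\bar b = \bar 0$. This is exactly the defining property of an approximately integral domain, so $R/I$ is one.

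I expect the only real obstacle to be the bookkeeping around the quotient rather than the logic: confirming that coset multiplication is well defined and that $(a\cdot b)+I = I$ is genuinely equivalent to $a\cdot b\in I$ in the ``approximately'' setting (where $\Phi^{*}$ need not commute with the operations), together with pinning down that $R$ must be commutative and $I$ proper for the statement to hold as written. Once the approximately-coset-ring structure from the cited definition is granted, the inclusion $I\subseteq\Phi^{*}I$ and the approximately-prime property carry the proof.
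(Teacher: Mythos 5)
Your proof is correct and takes essentially the same route as the paper's: pass to cosets, deduce from $(a+I)\cdot(b+I)=I$ that $a\cdot b\in\Phi^{*}I$, and invoke the approximately prime property of $I$ to conclude $a+I=I$ or $b+I=I$. The only differences are cosmetic refinements — you justify $a\cdot b\in\Phi^{*}I$ via the inclusion $I\subseteq\Phi^{*}I$ from (DP.2) and explicitly check the commutativity and non-zero clauses, steps the paper's proof asserts or leaves implicit.
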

\begin{proof}
	Assume $I$ is an approximately prime ideal in an approximately ring $R$. We need to show that $R/I$ is an approximately integral domain. If $r_{1} + I$ and $r_{2} + I$ are elements in $R/I$ such that $(r_{1} + I)\cdot(r_{2} + I) =  0 + I = I$. We have $(r_{1} + I)\cdot(r_{2} + I) = r_{1}\cdot r_{2} + I =  I$, which implies $r_{1} \cdot r_{2} \in \Phi^{*} I$, since $I$ is an approximately prime ideal, so either $r_{1} \in I$, implies $r_{1} + I = I$ or $r_{2} \in I$, which implies $r_{2} + I = I$. Therefore, $R/I$ is an approximately integral domain.   
\end{proof}
	
\begin{definition} {\bf (Approximately Multiplicatively Closed Set)}
  Let $S$ be a subset of an approximately ring $R$, such that $S$ is called an approximately multiplicatively closed set, if and only if $S$ is not empty, $S$ does not contain the zero element of $R$, and if $a, b \in S$, then $a\cdot b \in \Phi^{*} S$. \qquad \textcolor{blue}{$\blacksquare$} 
\end{definition}
		
\begin{theorem} 
  Let $I$ be an approximately ideal in an approximately ring $R$, and $S = R\setminus\Phi^{*} I$ is a non-empty set. If $I$ is an approximately prime ideal, then $S$ is an approximately multiplicatively closed set.
\end{theorem}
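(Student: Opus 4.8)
The plan is to check, one at a time, the three defining properties of an approximately multiplicatively closed set for $S = R\setminus\Phi^{*}I$. Non-emptiness is assumed in the hypothesis, so only the absence of the zero element and the multiplicative closure require argument, and only the latter is substantial.

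A preliminary observation I would isolate first is that $A\subseteq\Phi^{*}A$ for every subset $A$: if $x\in A$ then $\Phi(x)\in\Phi(\{x\})\cap\Phi(A)$, so $\{x\}\ \dnear\ A$ by (DP.2), i.e. $x\in\Phi^{*}A$. For the zero element, I would use that an approximately ideal is non-empty: fix $x\in I$; then $-x\in I$, and the additive closure clause of the definition of approximately ideal gives $0 = x+(-x)\in\Phi^{*}I$. Hence $0\notin R\setminus\Phi^{*}I = S$.

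For the multiplicative closure, take $a,b\in S$. Since $a,b\notin\Phi^{*}I$ and $I\subseteq\Phi^{*}I$, neither $a$ nor $b$ belongs to $I$; the contrapositive of the defining implication of the approximately prime ideal $I$ (that $ab\in\Phi^{*}I$ forces $a\in I$ or $b\in I$) then gives $a\cdot b\notin\Phi^{*}I$. Because $R$ is an approximately semigroup under ``$\cdot$'', we also have $a\cdot b\in\Phi^{*}R$. The hard part is the last step: concluding $a\cdot b\in\Phi^{*}S$. One cannot simply say $a\cdot b\in S$, since $a\cdot b$ need only lie in $\Phi^{*}R$, not in $R$ itself.

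To bridge this I would establish the inclusion $\Phi^{*}R\setminus\Phi^{*}I\subseteq\Phi^{*}S$ and apply it to $z = a\cdot b$. Given $z\in\Phi^{*}R$ with $z\notin\Phi^{*}I$, the relation $\{z\}\ \dnear\ R$ produces some $r\in R$ with $\Phi(r)=\Phi(z)$; were $r\in\Phi^{*}I$, then $\Phi(z)=\Phi(r)$ would lie in $\Phi(I)$ and force $z\in\Phi^{*}I$, a contradiction, so $r\in R\setminus\Phi^{*}I = S$, whence $\Phi(z)\in\Phi(S)$ and $z\in\Phi^{*}S$. Combined with $a\cdot b\in\Phi^{*}R$ and $a\cdot b\notin\Phi^{*}I$ from the previous step, this yields $a\cdot b\in\Phi^{*}S$ and completes the verification. (Should the ambient conventions force $a\cdot b\in R$ whenever $a,b\in R$, this final paragraph shortens to the single remark $a\cdot b\in R\setminus\Phi^{*}I = S\subseteq\Phi^{*}S$.)
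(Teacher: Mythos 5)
Your proposal is correct, and it is in fact tighter than the paper's own argument. The paper proceeds by contradiction: it asserts $0\in\Phi^{*}I$, takes $a,b\in S$, and claims that if $a\cdot b\notin\Phi^{*}S$ then $a\cdot b\in\Phi^{*}I$, after which primeness gives $a\in I$ or $b\in I$, contradicting $a,b\notin\Phi^{*}I$ (tacitly via $I\subseteq\Phi^{*}I$). The step ``$a\cdot b\notin\Phi^{*}S$ implies $a\cdot b\in\Phi^{*}I$'' is left unjustified there: since $a\cdot b$ need only lie in $\Phi^{*}R$ rather than in $R$ itself, that dichotomy is not automatic, which is exactly the difficulty you identified. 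Your bridging inclusion $\Phi^{*}R\setminus\Phi^{*}I\subseteq\Phi^{*}S$, proved by choosing $r\in R$ with $\Phi(r)=\Phi(z)$ and observing that $r$ must land in $S$, is precisely the missing ingredient (its contrapositive is the paper's unjustified step), so your direct route --- contrapositive of primeness to get $a\cdot b\notin\Phi^{*}I$, then the inclusion to get $a\cdot b\in\Phi^{*}S$ --- follows the same skeleton as the paper while repairing its gap. You also justify $0\in\Phi^{*}I$ (via $x+(-x)\in\Phi^{*}I$ for some $x\in I$), which the paper merely asserts; note this uses $I\neq\emptyset$, a harmless standing assumption in this setting. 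Finally, your preliminary observation $A\subseteq\Phi^{*}A$ is also what the paper silently relies on in its concluding contradiction, so making it explicit is an improvement rather than a deviation.
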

\begin{proof}
   Suppose $I$ is an approximately prime ideal in an approximately ring $R$. The zero element belongs to $\Phi^{*} I$, hence $0 \notin S$. Now let $a,b \in S$, then $a,b \notin \Phi^{*} I$. So, either $a \cdot b \in \Phi^{*} S$ or $a\cdot b \notin \Phi^{*} S$. Assume $a\cdot b \notin \Phi^{*} S$, then $a\cdot b \in \Phi^{*} I$ and since $I$ is an approximately prime ideal. Thus, either $a \in I$ or $b \in I$. But this contradicts the fact that $a,b \notin \Phi^{*} I$. Therefore, $a\cdot b \in \Phi^{*} S$, so $S$ is approximately multiplicatively closed.
\end{proof}

Assume $R$ is a commutative approximately ring and the pair \boxed{(\Phi^{*}R,+),(\Phi^{*}R,\cdot)} are groupoids. Let $p$ be a non-approximately unit in $R$. The element $p$ is called an approximately principle prime, provided the non-zero ideal 
	\[(p)= \{ p\cdot k  \ \ : \   \ k \in R, \  p\cdot k \in R\}\]
is an approximately prime ideal in $R$.
	
\begin{theorem}
  Suppose $R$ is a commutative approximately ring, $p$ is a non-approximately unit in $R$ and $\Phi^{*}R$ is groupoid with binary operations \enquote{+} and \enquote{$\cdot$}. The non-zero set $(p)$ is an approximately ideal in $R$. 
\end{theorem}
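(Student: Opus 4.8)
The plan is to verify directly that $(p)$ satisfies the three defining conditions of the Approximately Ideal definition: for all $x,y\in(p)$ and all $r\in R$, that $x+y\in\Phi^{*}(p)$, that $-x\in(p)$, and that $r\cdot x\in\Phi^{*}(p)$; the right-absorption condition $x\cdot r\in\Phi^{*}(p)$ is then immediate from commutativity $({\bf AR}_4)$. Non-emptiness is part of the hypothesis, since $(p)$ is taken to be the non-zero set $\{p\cdot k : k\in R,\ p\cdot k\in R\}$. Throughout I would write a typical element of $(p)$ as $x=p\cdot k_{1}$ with $k_{1}\in R$ and $p\cdot k_{1}\in R$, and freely use that $A\subseteq\Phi^{*}A$ for every $A\subseteq X$, a consequence of $({\bf DP}.2)$.

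First I would treat closure under $+$. For $x=p\cdot k_{1}$ and $y=p\cdot k_{2}$ in $(p)$, the distributive law $({\bf AR}_3)$, valid in $\Phi^{*}R$ for $p,k_{1},k_{2}\in R$, gives $x+y=p\cdot k_{1}+p\cdot k_{2}=p\cdot(k_{1}+k_{2})$ as an identity in $\Phi^{*}R$. Since $R$ is an abelian approximately group under $+$, we have $k_{1}+k_{2}\in\Phi^{*}R$; and since $(\Phi^{*}R,\cdot)$ is a groupoid with $p\in\Phi^{*}R$, the product $p\cdot(k_{1}+k_{2})$ again lies in $\Phi^{*}R$. The remaining step is to argue that $x+y=p\cdot(k_{1}+k_{2})$ is descriptively near $(p)$, i.e. that $x+y\in\Phi^{*}(p)$, by reducing near-ness of $p\cdot(k_{1}+k_{2})$ to the near-ness of $k_{1}+k_{2}$ with $R$.

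For additive inverses, since $(p)\subseteq R$ and $R$ is an approximately group under $+$, the inverse $-x$ of $x=p\cdot k_{1}$ already lies in $R$; combining $({\bf AR}_3)$ with $p\cdot(-k_{1})+p\cdot k_{1}=p\cdot\big((-k_{1})+k_{1}\big)$ identifies $-x$ with $p\cdot(-k_{1})$, and $-k_{1}\in R$ by $({\bf AG}_4)$ applied inside $R$, so $-x=p\cdot(-k_{1})$ with $-k_{1}\in R$ and $p\cdot(-k_{1})=-x\in R$; hence $-x\in(p)$. For absorption, take $x=p\cdot k_{1}\in(p)$ and $r\in R$; associativity of $\cdot$ in $\Phi^{*}R$ (from $({\bf AR}_2)$) together with commutativity $({\bf AR}_4)$ give $r\cdot x=r\cdot(p\cdot k_{1})=p\cdot(r\cdot k_{1})$ in $\Phi^{*}R$, where $r\cdot k_{1}\in\Phi^{*}R$ because $R$ is an approximately semigroup and $p\cdot(r\cdot k_{1})\in\Phi^{*}R$ because $(\Phi^{*}R,\cdot)$ is a groupoid; I would then conclude $r\cdot x\in\Phi^{*}(p)$ exactly as in the additive case, and note $x\cdot r=r\cdot x$.

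The step I expect to be the main obstacle is the bookkeeping forced by the side condition ``$p\cdot k\in R$'' in the definition of $(p)$: the ring axioms only place sums and products in $\Phi^{*}R$, not in $R$, so the delicate point in each case is to show that an element of $\Phi^{*}R$ of the form $p\cdot z$ with $z\in\Phi^{*}R$ is actually descriptively near $(p)$. This is exactly where the hypothesis that $(\Phi^{*}R,+)$ and $(\Phi^{*}R,\cdot)$ are both groupoids is used, keeping all the intermediate expressions inside $\Phi^{*}R$; a secondary point to state cleanly is why the ``non-zero'' set $(p)$ counts as a legitimate non-empty subset of $R$ to begin with.
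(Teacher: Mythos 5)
Your overall strategy coincides with the paper's: write elements of $(p)$ as $p\cdot k_{1}$, $p\cdot k_{2}$, pull $p$ out using distributivity $({\bf AR}_3)$ together with commutativity and associativity, and thereby reduce every condition to the claim that an element of the form $p\cdot z$ with $z\in\Phi^{*}R$ lies in $\Phi^{*}(p)$. But that claim is precisely what you leave open --- you flag it yourself as ``the remaining step'' and ``the main obstacle'' --- and it is the only non-formal content of the theorem; everything else is bookkeeping. The paper closes it by invoking the definition of the descriptively upper approximation: since $k_{1}-k_{2}\in\Phi^{*}R$ (resp.\ $r\cdot k_{1}\in\Phi^{*}R$), nearness of $\{k_{1}-k_{2}\}$ to $R$ produces a representative $k\in R$ with $\Phi(k)=\Phi(k_{1}-k_{2})$, and then $p\cdot(k_{1}-k_{2})$ is concluded to be descriptively near $(p)$ because the witness $p\cdot k$ belongs to $(p)$; the same move disposes of $p\cdot(r\cdot k_{1})$. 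Nothing in your writeup supplies this extraction of a representative in $R$ or the transfer of descriptions through multiplication by $p$, so as it stands the memberships $x+y\in\Phi^{*}(p)$ and $r\cdot x\in\Phi^{*}(p)$ are announced rather than proved.

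A secondary difference: the paper verifies $x-y\in\Phi^{*}(p)$ in a single stroke, whereas you split the additive conditions into $x+y\in\Phi^{*}(p)$ and $-x\in(p)$, following the stated definition more literally. Your inverse step, however, is on shakier ground than the paper's subtraction shortcut: to have $-x\in(p)$ you must exhibit $-x$ as $p\cdot k$ with $k\in R$ \emph{and} $p\cdot k\in R$, and your identification $-x=p\cdot(-k_{1})$ quietly uses $p\cdot 0=0$ and uniqueness of additive inverses, neither of which is automatic in this setting where sums and products of elements of $R$ are only guaranteed to land in $\Phi^{*}R$. So beyond the central gap above, the $-x$ clause would need its own argument (or the paper's $x-y$ formulation) to go through.
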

\begin{proof}
  Let $x,y \in (p)$, then $x= p\cdot k_{1}$ and $y= p\cdot k_{2}$ where $k_{1}$ and $k_{2}$ are elements in $R$. Now $x-y=p\cdot (k_{1} -k_{2})$, $k_{1} - k_{2} \in \Phi^{*}R$, which implies that there exists $k \in R$ such that $\Phi(k) = \Phi(k_{1} - k_{2})$. Therefore, $p\cdot (k_{1} - k_{2}) \in \Phi^{*}(p)$, since $p\cdot k \in (p)$. Thus, $x-y \in \Phi^{*}(p)$.\newline
  Let  $r\in R$ and $x\in (p)$ so that  $x= p\cdot k_{1}$,  for some $k_1\in R$. Now $r\cdot x =r\cdot (p \cdot k_1) = (r\cdot p)\cdot k_{1}= (p\cdot r)\cdot k_{1}) =p \cdot (r\cdot k_1) \in \Phi^{*}(p)$.
\end{proof}
	
\begin{definition} {\bf (Approximately irreducible element)} \\
  An element $a \in R$ is called an approximately irreducible element if and only if $a$ is not an approximately unit and $a=b \cdot c$, where $bc \in \Phi^{*} R$. Then, $b$ is an approximately unit or $c$ is an approximately unit. \qquad \textcolor{blue}{$\blacksquare$} 
\end{definition}
		
\begin{lemma}
	Let $D$ be an approximately integral domain with identity. An approximately principle prime element $p$ is an approximately irreducible element.  
\end{lemma}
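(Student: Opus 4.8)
The plan is to transcribe the classical argument \emph{a prime element of an integral domain is irreducible} into the descriptive setting, keeping careful track of where the upper approximation $\Phi^{*}$ intervenes. First I would record what the hypotheses give us: since $p$ is an approximately principle prime, $p$ is a non-approximately unit and the \emph{non-zero} ideal $(p)=\{p\cdot k : k\in D,\ p\cdot k\in D\}$ is an approximately prime ideal of $D$; moreover the standing assumptions attached to that notion say $D$ is commutative and $(\Phi^{*}D,+),(\Phi^{*}D,\cdot)$ are groupoids, and since $D$ has identity, $1_{D}\in\Phi^{*}D$. From $(p)\neq\{0\}$ we also get $p\neq 0$, which is precisely what the integral-domain cancellation will consume.

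Next I would begin from an arbitrary factorization $p=b\cdot c$ with $b\cdot c\in\Phi^{*}D$, as in the definition of an approximately irreducible element, and aim to show that $b$ or $c$ is an approximately unit. Since $1_{D}\in\Phi^{*}D$ one has $p\in(p)\subseteq\Phi^{*}(p)$, hence $b\cdot c=p\in\Phi^{*}(p)$; applying the approximately-prime-ideal property of $(p)$ yields $b\in(p)$ or $c\in(p)$. By commutativity of $D$ these two cases are symmetric, so I would treat $b\in(p)$, i.e.\ $b=p\cdot d$ for some $d\in D$ with $p\cdot d\in D$.

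Then I would substitute and use that associativity and distributivity hold in $\Phi^{*}D$ (because $D$ is an approximately ring and $\Phi^{*}D$ carries the groupoid structure): $p=b\cdot c=(p\cdot d)\cdot c=p\cdot(d\cdot c)$, so $p\cdot(1_{D}-d\cdot c)=0$ in $\Phi^{*}D$. Since $D$ is an approximately integral domain and $p\neq 0$, the cancellation property forces $1_{D}-d\cdot c=0$, that is $d\cdot c=1_{D}$; together with commutativity this exhibits a two-sided inverse of $c$, so $c$ is an approximately unit, and symmetrically $b$ is an approximately unit in the case $c\in(p)$. In either case the requirement in the definition of approximately irreducible element is met, and $p$ is approximately irreducible.

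The step I expect to be the main obstacle is the cancellation step: one must check that $1_{D}-d\cdot c$ legitimately lies in the range where the approximately-integral-domain axiom applies (that axiom is phrased for $a,b$ with $ab\in\Phi^{*}D$), and more generally that the intermediate products $p\cdot d$, $(p\cdot d)\cdot c$, and $p\cdot(d\cdot c)$ all remain inside $\Phi^{*}D$, where the ring identities are available. This is exactly why the groupoid hypotheses on $(\Phi^{*}D,+)$ and $(\Phi^{*}D,\cdot)$ and the membership $1_{D}\in\Phi^{*}D$ are invoked, and I would make those bookkeeping verifications explicit rather than leaving them as routine.
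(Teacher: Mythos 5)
Your proposal is correct and follows essentially the same route as the paper's proof: use primeness of $(p)$ to get $b\in(p)$ or $c\in(p)$ from the factorization, substitute, and cancel by $p\neq 0$ in the approximately integral domain to produce a unit. The only differences are cosmetic (you argue directly where the paper argues by contradiction) and that you make explicit the cancellation step $p\cdot(1_{D}-d\cdot c)=0$ and the membership bookkeeping in $\Phi^{*}D$, which the paper leaves implicit.
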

\begin{proof}
	Let $p$ be an approximately principle prime element in $R$, then $(p)$ is an approximately prime ideal. Assume $p$ is not an approximately irreducible element. Then if $p=a \cdot b$ where
	$a \cdot b \in \Phi^{*} D$ and neither $a$ nor $b$ is an approximately unit in $D$. Now $p=a \cdot b$ implies
	$a \cdot b \in \Phi^{*} (p)$. Since $(p)$ is an approximately prime ideal, then $a \in (p)$ or $b \in (p)$. Suppose $ a \in (p)$, hence $a=p\cdot k$ where $k \in D$. Now $p = a\cdot b = p\cdot (k\cdot b)$ we have $k \cdot b = 1_D$. Hence, $b$ is an approximately unit which contradicts our assumption. In a similar way if $b \in (p)$, then $a$ would be an approximately unit. Therefore, $p$ is an approximately irreducible element. 
\end{proof}

\begin{theorem}
  For a commutative approximately ring $R$. If every approximately ideal in $R$ (other than $R$) is approximately prime, then $R$ is an approximately integral domain.
\end{theorem}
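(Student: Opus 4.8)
The plan is to apply the hypothesis to a single approximately ideal, namely the zero ideal $\{0\}$, for which the property of being approximately prime coincides exactly with $R$ being an approximately integral domain. First I would note that we may assume $R\neq\{0\}$, since ``approximately integral domain'' is a notion about a non-zero commutative approximately ring; then $\{0\}$ is an approximately ideal of $R$ \emph{other than} $R$, so by hypothesis it is an approximately prime ideal.

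Granting this, the conclusion is a short unwinding of the definitions. Let $a,b\in R$ with $ab\in\Phi^{*}R$ and $ab=0$. Then $\Phi(ab)=\Phi(0)$, so $\{ab\}\ \dnear\ \{0\}$, i.e. $ab\in\Phi^{*}\{0\}$. Since $\{0\}$ is an approximately prime ideal, $a\in\{0\}$ or $b\in\{0\}$, that is, $a=0$ or $b=0$; as $R$ is commutative and non-zero, this is precisely the assertion that $R$ is an approximately integral domain. (Alternatively one could appeal to the earlier theorem that $R/I$ is an approximately integral domain whenever $I$ is an approximately prime ideal, applied to $I=\{0\}$, but the direct argument is cleaner since it needs no identification of $R$ with $R/\{0\}$.)

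The one genuine obstacle is verifying that $\{0\}$ is indeed an approximately ideal of $R$, so that the hypothesis covers it. One checks the three closure requirements: $0\in\Phi^{*}\{0\}$ holds since $\Phi(0)\in\Phi(\{0\})\cap\Phi(\{0\})$; $-0=0\in\{0\}$ is immediate; and $r\cdot 0=0$ for $r\in R$ follows from $({\bf AR}_3)$, which gives $r\cdot 0=r\cdot(0+0)=r\cdot 0+r\cdot 0$ in $\Phi^{*}R$, after cancelling $r\cdot 0$ additively. This last point is the place where the framework's quirks bite: the additive identity is a priori only guaranteed to lie in $\Phi^{*}R$, and the cancellation uses additive inverses there, so one should run the argument under the standing assumption --- as in the preceding principal-prime results --- that $(\Phi^{*}R,+)$ is a groupoid (group), with $0\in R$ as in the paper's digital-image examples. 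Modulo this bookkeeping, the proof is complete.
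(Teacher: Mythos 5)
Your proof takes essentially the same route as the paper: invoke the hypothesis on the zero ideal, which is therefore approximately prime, and then unwind the definition so that $ab=0$ gives $ab\in\Phi^{*}\{0\}$ and hence $a=0$ or $b=0$. The only difference is that you also check that $\{0\}$ is an approximately ideal (and flag the need for $0\in R$ and for additive cancellation in $\Phi^{*}R$), a point the paper's proof simply takes for granted; this is extra care, not a divergence in method.
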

\begin{proof}
	Let $a,b \in R$, such that $ab=0$ where $a\cdot b \in \Phi^{*}R$. Since every ideal other than $R$ is approximately prime, then the zero ideal $(0)$ must be an approximately prime. Now $a\cdot b \in \Phi^{*}(0)$ because $a\cdot b=0$, hence $a \in (0)$ or $b \in (0)$, which means either $a = 0$ or $b = 0$. Thus, $R$ is an approximately integral domain.
\end{proof}

\begin{theorem}
   Let $R$ be an approximately integral domain with identity. If every approximately ideal in $R$ (other than $R$) is approximately prime, then $R$ is an approximately field.    
\end{theorem}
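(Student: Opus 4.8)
The plan is to follow the classical pattern: a commutative ring with identity in which every proper ideal is prime is a field, with each step transported into the descriptive-proximity setting. To establish that $R$ is an approximately field we must exhibit $(R\setminus\{0\},\cdot)$ as a commutative approximately group, so the real content is producing an inverse in $R$ for each non-zero element. Since $R$ is already a commutative approximately ring with identity, the commutativity, the associativity in $\Phi^{*}R$, and the identity $1_R$ are inherited from $R$; moreover, because $R$ is an approximately integral domain a product of non-zero elements is non-zero, so closure $x\cdot y\in\Phi^{*}(R\setminus\{0\})$ for $x,y\neq 0$ comes for free. Thus only axiom $({\bf AG}_4)$ — the existence of inverses — requires work.

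First I would fix $a\in R$ with $a\neq 0$, and dispose of the case where $a$ is already an approximately unit. So assume $a$ is a non-approximately unit. I claim $a\cdot a$ is then non-zero and not an approximately unit: non-zero because $R$ is an approximately integral domain and $a\neq 0$; and not a unit because a relation $(a\cdot a)\cdot c = 1_R$ would give $a\cdot(a\cdot c)=1_R$, a right inverse of $a$, and symmetrically a left inverse, contradicting the choice of $a$. Hence, by the preceding theorem (using, as throughout this section, that $\Phi^{*}R$ is a groupoid under the two operations), the non-zero set $(a\cdot a)=\{(a\cdot a)\cdot k : k\in R,\ (a\cdot a)\cdot k\in R\}$ is an approximately ideal of $R$.

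Next I would apply the hypothesis: every approximately ideal of $R$ other than $R$ is approximately prime, so $(a\cdot a)$ is approximately prime (and if $(a\cdot a)=R$ one reads off directly that $a$ is a unit, so that case is harmless). Arguing as in the proof of the Lemma on approximately irreducible elements, $a\cdot a\in\Phi^{*}(a\cdot a)$, so the defining property of an approximately prime ideal forces $a\in(a\cdot a)$, i.e.\ $a=(a\cdot a)\cdot k=a\cdot(a\cdot k)$ for some $k\in R$. Then $a\cdot(1_R - a\cdot k)=0$ in $\Phi^{*}R$, and since $R$ is an approximately integral domain with $a\neq 0$ we conclude $a\cdot k=1_R$; commutativity then gives $k\cdot a=1_R$ as well, and $k\neq 0$ because $1_R\neq 0$ in the non-zero ring $R$. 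Hence $a$ is an approximately unit with inverse $k\in R\setminus\{0\}$.

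Finally I would assemble the pieces: every non-zero element of $R$ is an approximately unit, products of non-zero elements stay non-zero, and the operations restrict correctly, so $(R\setminus\{0\},\cdot)$ satisfies $({\bf AG}_1)$--$({\bf AG}_4)$ and is commutative; therefore $R$ is an approximately field. The step I expect to be the main obstacle is the bookkeeping around the descriptive upper approximation: verifying cleanly that $a\cdot a\in\Phi^{*}(a\cdot a)$, that the witness $k$ for $a\in(a\cdot a)$ can be taken genuinely in $R$ rather than only in $\Phi^{*}R$, and that the cancellation $a\cdot(1_R - a\cdot k)=0\Rightarrow a\cdot k=1_R$ is licensed by the precise form of the approximately-integral-domain axiom even though $1_R$ a priori lives in $\Phi^{*}R$.
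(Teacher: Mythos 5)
Your proposal follows essentially the same route as the paper's own proof: form the principal ideal generated by the square of a non-zero element, invoke the primality hypothesis to conclude $a\in(a\cdot a)$, write $a=a^{2}k$, and cancel in the approximately integral domain to get $a\cdot k=1_R$. Your write-up is in fact somewhat more careful than the paper's (handling units, the case $(a\cdot a)=R$, the membership $a\cdot a\in\Phi^{*}(a\cdot a)$, and the verification of the approximately-group axioms for $R\setminus\{0\}$), but the core argument is identical.
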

\begin{proof}
  To prove that $R$ is an approximately field, it suffices to show that every non-zero element of $R$ has an approximately inverse.	
  Let $b$ be any non-zero element in $R$. Consider the approximately prime ideal $(b^2)$ where $b^2 \in (b^2)$ and $b^2 = b\cdot b$, implies $b \in (b^2)$. Hence, $b = b^{2} r$ where $r \in R$. Now we have $b\cdot 1_{R} - b^2\cdot r= 0$, then $ b\cdot (1_{R} - b\cdot r) = 0$, since $R$ is an approximately integral domain and $ b \neq 0$. So, $1_{R} - b\cdot r=0$, this implies $1_{R} = b\cdot r$ which means that $r$ is an approximately inverse of $b$ in $R$. Therefore, $R$ is an approximately field.
\end{proof}
	
\subsection{The product of two approximately ideals}

Suppose $A$ and $B$ are two approximately ideals in $R$. The product of two ideals $AB$ is defined by 
\[
AB= \left\{ \displaystyle \sum_{i=1}^{n} a_i\cdot b_i \ \ : \ \ a_i \in A, b_i \in B, \quad n\in\mathbb{Z}^+, \quad  a_i\cdot  b_i \in \Phi^{*}R, \sum_{i=1}^{n} a_ib_i \in R \right\}.
\] 
	
\begin{lemma}
  Let $R$ be an approximately commutative ring with identity. If $A$ and $B$ are approximately ideals of $R$, then $AB$ is an approximately ideal in $R$.     
\end{lemma}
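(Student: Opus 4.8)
The plan is to verify directly the three clauses in the definition of an approximately ideal for the set $AB$: for all $x,y\in AB$ and all $r\in R$, that $x+y\in\Phi^{*}(AB)$, that $-x\in AB$, and that $r\cdot x\in\Phi^{*}(AB)$ (the right-sided absorption $x\cdot r\in\Phi^{*}(AB)$ will then be immediate from $({\bf AR}_4)$). Throughout I would record a typical element of $AB$ as a finite sum $x=\sum_{i=1}^{n}a_i\cdot b_i$ with $a_i\in A$, $b_i\in B$, each $a_i\cdot b_i\in\Phi^{*}R$, and $\sum_{i=1}^{n}a_ib_i\in R$.

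First I would dispose of closure under negation, which is the cleanest clause. For $x=\sum_{i=1}^{n}a_i\cdot b_i\in AB$, since $(R,+)$ is an abelian approximately group we have $-x\in R$, and distributing the sign gives $-x=\sum_{i=1}^{n}(-a_i)\cdot b_i$, where $-a_i\in A$ because $A$ is an approximately ideal, $b_i\in B$, and each $(-a_i)\cdot b_i\in\Phi^{*}R$ since $\cdot$ is an approximately semigroup operation on $R$. As this sum equals $-x\in R$, the element $-x$ meets every requirement in the definition of $AB$, so $-x\in AB$.

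Next I would treat closure under $+$ and absorption together, since they produce the same shape of expression. For $x=\sum_{i=1}^{n}a_i b_i$ and $y=\sum_{j=1}^{m}a'_j b'_j$ in $AB$, the sum $x+y$ is again a concatenated finite sum $\sum_{k=1}^{n+m}c_k\cdot d_k$ with $c_k\in A$, $d_k\in B$, each $c_k\cdot d_k\in\Phi^{*}R$, while $x+y\in\Phi^{*}R$ because $(R,+)$ is an abelian approximately group. For $r\in R$, distributivity in $\Phi^{*}R$ (axiom $({\bf AR}_3)$) gives $r\cdot x=\sum_{i=1}^{n}(r\cdot a_i)\cdot b_i$; here $r\cdot a_i\in\Phi^{*}A$ since $A$ is an approximately ideal, so choosing $a'_i\in A$ with $\Phi(a'_i)=\Phi(r\cdot a_i)$ rewrites $r\cdot x$, up to descriptive equality, as $\sum_{i=1}^{n}a'_i\cdot b_i$. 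In each case one obtains an expression of exactly the form defining a member of $AB$, except that the overall sum is only guaranteed a priori to lie in $\Phi^{*}R$, not in $R$.

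The hard part will be precisely this reconciliation. To conclude $x+y\in\Phi^{*}(AB)$ and $r\cdot x\in\Phi^{*}(AB)$ one must exhibit elements of $AB$ that are descriptively equal to $x+y$ and to $r\cdot x$: when $x+y$ (resp.\ $r\cdot x$) already lies in $R$ it is literally in $AB$ and nothing is needed, but in general the argument has to use $({\bf DP}.2)$ together with $\Phi(x+y),\Phi(r\cdot x)\in\Phi(R)$, and the compatibility of the probe $\Phi$ with $+$ and $\cdot$ that is invoked implicitly in the earlier proofs (as when, in the proof that $(p)$ is an approximately ideal, a $\Phi$-representative $k\in R$ of $k_1-k_2$ is carried through multiplication by $p$). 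Once this bridge between the ``$\sum a_ib_i\in R$'' constraint and the upper approximations $\Phi^{*}R,\Phi^{*}A$ is secured, all three clauses hold and $AB$ is an approximately ideal of $R$; commutativity of $R$ is what lets the single absorption computation cover both sides, while the identity of $R$ is used at most to ensure $AB\neq\emptyset$.
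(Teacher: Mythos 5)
Your proposal follows essentially the same route as the paper's own proof: write elements of $AB$ as finite sums $\sum a_i\cdot b_i$, handle addition/negation by concatenating (and negating factors of) such sums using that $A$ is an approximately ideal, and handle absorption by rewriting $r\cdot x=\sum (r\cdot a_i)\cdot b_i$ with $r\cdot a_i\in\Phi^{*}A$. If anything you are more careful than the paper, which silently treats the concatenated sum as lying in $R$ and passes from $r\cdot a_i\in\Phi^{*}A$ to $r\cdot x\in\Phi^{*}(AB)$ without the reconciliation step you explicitly flag.
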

\begin{proof}
   Let $x,y \in AB$, then $x = \displaystyle \sum_{i=1}^{n} a_i\cdot  b_i$ and $y = \displaystyle \sum_{j=1}^{m} \bar a_j \cdot  \bar b_j$. Assume $\bar a_{j} = -a_{j+n} \in A$ and $\bar b_{j} = b_{j+n} \in B$. Now $x - y = \displaystyle \sum_{i=1}^{n} a_i\cdot  b_i - \displaystyle \sum_{j=1}^{m} \bar a_j \cdot  \bar b_j = \displaystyle \sum_{i=1}^{n+m} a_i \cdot  b_i$,thus $x- y \in AB$, which implies $x-y \in \Phi^{*} AB$.
   Suppose $x \in AB$ and $r \in R$, then $x = \displaystyle \sum_{i=1}^{n} a_i\cdot  b_i$  we have to show that $rx \in \Phi^{*} AB$. Now $r\cdot  x = r\cdot \displaystyle \sum_{i=1}^{n} a_i\cdot  b_i$ $= \displaystyle \sum_{i=1}^{n} (r \cdot  a_i)\cdot  b_i$, since $ra_{i} \in \Phi^{*} A$ for all $i\in I$, we conclude that $r\cdot  x \in \Phi^{*} AB$.
\end{proof}

\begin{theorem}
   If $A$ is an approximately prime ideal in an approximately ring $R$ and $A=BC$, where $B$ and $C$ are approximately ideals in $R$, then either $B \subset A$ or $C \subset A$.
\end{theorem}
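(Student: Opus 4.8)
The plan is to mimic the classical commutative-algebra argument that a prime ideal containing a product $BC$ of ideals must contain one of the factors, adapting each set-membership step to the ``$\Phi^{*}$-wrapped'' versions that govern approximately rings. So suppose $A = BC$ is an approximately prime ideal and, for contradiction, that $B \not\subset A$ and $C \not\subset A$. Then I can pick witnesses $b \in B \setminus A$ and $c \in C \setminus A$. The goal is to show $b \cdot c \in \Phi^{*}A$, because then the approximately prime property of $A$ (Definition of Approximately Prime Ideal) forces $b \in A$ or $c \in A$, contradicting the choice of $b$ and $c$.

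The key step is therefore to locate $b \cdot c$ inside $\Phi^{*}A = \Phi^{*}(BC)$. Since $b \in B$ and $c \in C$, the single-term sum $\sum_{i=1}^{1} b \cdot c$ is exactly of the form appearing in the definition of the product $BC$, provided the side conditions $b \cdot c \in \Phi^{*}R$ and $b \cdot c \in R$ hold; the first is automatic because $R$ is (as an approximately ring) an approximately semigroup under ``$\cdot$'' so $b\cdot c \in \Phi^{*}R$, and for the membership-in-$R$ clause I would invoke the descriptive-proximity mechanism used in the proof of the earlier $(p)$-ideal theorem: $b\cdot c \in \Phi^{*}R$ means there is $k \in R$ with $\Phi(k) = \Phi(b\cdot c)$, so $b\cdot c$ lies in the $\Phi^{*}$-closure of the set of admissible sums. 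This places $b\cdot c \in \Phi^{*}(BC) = \Phi^{*}A$, which is all that is needed.

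**The main obstacle** I anticipate is precisely this bookkeeping with the two defining side conditions ``$a_i \cdot b_i \in \Phi^{*}R$'' and ``$\sum a_i b_i \in R$'' in the definition of $BC$: one must be careful that a bare product $b\cdot c$ of an element of $B$ and an element of $C$ genuinely qualifies as a (length-one) element of $BC$ in the descriptive setting, rather than merely of $\Phi^{*}(BC)$. If $b \cdot c$ itself need not land in $R$, then one only concludes $b\cdot c \in \Phi^{*}(BC)$ directly, which is still exactly the hypothesis required to apply approximate primeness — so either way the conclusion goes through, but the write-up should make the weaker, safer claim. A secondary point to handle cleanly is that $A = BC$ is an equality of sets, so $\Phi^{*}A = \Phi^{*}(BC)$ verbatim; no extra containment lemma is needed there.

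**Conclusion of the argument.** Having shown $b \cdot c \in \Phi^{*}A$ with $b \in B$, $c \in C$, approximate primeness of $A$ yields $b \in A$ or $c \in A$. Either alternative contradicts $b \in B \setminus A$ or $c \in C \setminus A$ respectively. Hence no such pair of witnesses exists, which means $B \subset A$ or $C \subset A$, as claimed.
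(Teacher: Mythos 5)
Your proposal is correct and takes essentially the same route as the paper's proof: choose a witness in $B\setminus A$ (the paper) or one witness in each of $B\setminus A$ and $C\setminus A$ (you), observe that the relevant product lies in $\Phi^{*}A$ because $A=BC$, and let the approximate primeness of $A$ force membership in $A$, the contradiction/contrapositive framing being only cosmetic. You are in fact slightly more careful than the paper about the side conditions in the definition of $BC$ (the paper simply asserts $b_{1}c_{i}\in\Phi^{*}A$ without comment), so no substantive difference remains.
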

\begin{proof}
  Without loss of generality, suppose $B \not\subset A$, then there exists some $b_{1} \in A$ such that $b_1 \notin A$. We have $A=BC$, then in particular $b_{1}c_i \in \Phi^{*}A$ for all $c_i \in C$ since $A$ is an approximately prime ideal, then $b_1 \in A$ or $c_i \in A$, but $b_1 \notin A$. Therefore, $c_i \in A$ for all $c_i \in C$.Hence, $C \subset A$.
\end{proof}

\subsection{Approximately prime rings}
	We say that an approximately ring $R$ is prime if and only if $(0)$ is an approximately prime ideal in $R$.
	
\begin{theorem}
   An approximately ring $R$ is prime if and only if for all $a, b \in R, a\cdot r \cdot b = 0$ for all non-zero element $r\in R$, which implies $a = 0$ or $b = 0$. 
\end{theorem}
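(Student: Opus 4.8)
The plan is to prove the two implications separately, after first unwinding the definition: $R$ is prime exactly when the zero ideal $(0)$ is an approximately prime ideal, that is, exactly when for all $a,b\in R$ with $a\cdot b\in\Phi^{*}(0)$ one has $a\in(0)$ or $b\in(0)$; since $0\in\Phi^{*}(0)$ this says that $a\cdot b=0$ forces $a=0$ or $b=0$. I read the right-hand side of the biconditional as: for all $a,b\in R$ and every non-zero $r\in R$, $a\cdot r\cdot b=0$ implies $a=0$ or $b=0$. Beyond the definitions the only facts needed are associativity of \enquote{$\cdot$} in $\Phi^{*}R$ (from $({\bf AR}_2)$), used to regroup triple products, and the identity $0\cdot x=0$, which follows from distributivity $({\bf AR}_3)$ and cancellation in the additive approximately group just as in ordinary rings.

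For the forward implication, assume $R$ is prime and fix $a,b\in R$ together with a non-zero $r\in R$ such that $a\cdot r\cdot b=0$. Regrouping gives $(a\cdot r)\cdot b=0\in\Phi^{*}(0)$, so the approximately prime property of $(0)$ yields $a\cdot r\in(0)$ or $b\in(0)$. If $b\in(0)$ then $b=0$ and we are finished. Otherwise $a\cdot r=0\in\Phi^{*}(0)$, and a second application of the approximately prime property of $(0)$ gives $a\in(0)$ or $r\in(0)$; since $r\neq0$, we conclude $a=0$.

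For the converse, assume the right-hand condition and let $a,b\in R$ with $a\cdot b\in\Phi^{*}(0)$, that is, $a\cdot b=0$; I must show $a=0$ or $b=0$, which is exactly what makes $(0)$ an approximately prime ideal and hence $R$ prime. If $b=0$ there is nothing to prove, so suppose $b\neq0$. Then $b$ is a non-zero element of $R$ and $a\cdot b\cdot b=(a\cdot b)\cdot b=0\cdot b=0$, so applying the hypothesis to $a$ and $b$ with the non-zero element $b$ in the middle slot gives $a=0$ or $b=0$; since $b\neq0$, we get $a=0$.

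The step I expect to attract the most scrutiny is the bookkeeping about where products live: to re-apply the element-wise definition of an approximately prime ideal first to $(a\cdot r)\cdot b$ and then to $a\cdot r$, one wants these products to lie in $R$ rather than merely in $\Phi^{*}R$, which is precisely what the standing groupoid hypotheses on $(\Phi^{*}R,\cdot)$ used elsewhere in this section are meant to supply; similarly the passage from \enquote{$a\cdot b\in\Phi^{*}(0)$} to \enquote{$a\cdot b=0$} is treated as in the earlier theorems of the section. Once those conventions are granted, the argument is just the two short chains of implications above.
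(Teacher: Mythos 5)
Your argument is correct (under the reading you state explicitly, which is the one that makes the equivalence provable) and follows essentially the same route as the paper: unwind primeness of $R$ as the elementwise prime property of the zero ideal and apply it to the triple product. You are in fact more complete than the paper, which dismisses the (condition $\Rightarrow$ prime) direction as ``easy to show'' and, in the other direction, applies primeness to $a\cdot r\cdot b\in\Phi^{*}(0)$ only once before jumping to the conclusion, whereas you supply the needed second application (to $a\cdot r=0$, using $r\neq 0$) and the explicit specialization $r:=b$; the $\Phi^{*}$-bookkeeping caveats you flag (products landing in $R$, $\Phi^{*}(0)$ versus $\{0\}$) are shared by the paper's own proofs.
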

\begin{proof} 
It is easy to show that for any $a,b \in R$, if $a\cdot r\cdot b=0$ for all non-zero element $r\in R$, implies $a= 0$ or $b=0$. Thus, $R$ is an approximately prime ring. Now we want to prove the converse.
Assume $R$ is an approximately prime ring and $a,b \in R$, $a\cdot r\cdot b = 0$ for all non-zero element $r \in R$, then $arb \in (0)$, since $R$ is an approximately prime ring, we have $(0)$ is an approximately prime ideal, i.e., if $arb \in \Phi^{*}(0)$. Hence, we can conclude that $a = 0$ or $b = 0$.
\end{proof}
	
\begin{theorem}
  $I$ is an approximately prime ideal of an approximately ring $R$, provided $R/I$ is an approximately prime ring.
\end{theorem}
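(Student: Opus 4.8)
The plan is to reverse the computation used in the first theorem of Section~3 (the one producing an approximately integral domain). By definition, saying that $R/I$ is an approximately prime ring means precisely that the zero ideal $(0)_{R/I}=\{\,0+I\,\}$ is an approximately prime ideal of $R/I$. So I would start from arbitrary $a,b\in R$ with $a\cdot b\in\Phi^{*}I$ and try to force $a\in I$ or $b\in I$, which is exactly the defining condition for $I$ to be an approximately prime ideal of $R$.

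First I would pass to the quotient: consider the approximately cosets $a+I$ and $b+I$ in $R/I$ and use the product rule $(a+I)\cdot(b+I)=a\cdot b+I$ exactly as it is used in the proofs above. The key step is then to show that $a\cdot b+I$ lies in the descriptive upper approximation $\Phi^{*}$ of the zero ideal of $R/I$: from $a\cdot b\in\Phi^{*}I$ we have $\{a\cdot b\}\ \dnear\ I$, i.e. $\Phi(a\cdot b)\in\Phi(I)$, and I would translate this, through the probe on cosets, into $\{a\cdot b+I\}\ \dnear\ \{0+I\}$, so that $a\cdot b+I\in\Phi^{*}((0)_{R/I})$. This is the mirror image of the implication ``$r_{1}r_{2}+I=I\Rightarrow r_{1}r_{2}\in\Phi^{*}I$'' already invoked earlier.

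Once that is in place, the hypothesis does the rest: since $(0)_{R/I}$ is an approximately prime ideal and $(a+I)\cdot(b+I)\in\Phi^{*}((0)_{R/I})$, we get $a+I\in(0)_{R/I}$ or $b+I\in(0)_{R/I}$, i.e. $a+I=0+I$ or $b+I=0+I$, hence $a\in I$ or $b\in I$. Therefore $I$ is an approximately prime ideal of $R$. Alternatively, one can argue by contraposition: an $I$ that is not approximately prime supplies $a,b\notin I$ with $a\cdot b\in\Phi^{*}I$, and then $a+I,b+I$ are nonzero cosets whose product sits in $\Phi^{*}$ of the zero ideal, contradicting primeness of $R/I$.

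I expect the only delicate point to be the middle step --- relating $\Phi^{*}I$ inside $R$ to $\Phi^{*}$ of the zero ideal inside $R/I$. It depends on how the probe map (and hence $\dnear$) is read off on the coset ring; the earlier proofs only needed the trivial direction $a\cdot b\in I\subseteq\Phi^{*}I$, whereas here one genuinely needs that descriptive nearness to $I$ in $R$ passes to descriptive nearness to the zero coset in $R/I$. Everything else (well-definedness of the coset product and the equivalence $a+I=0+I\iff a\in I$) is routine and already used in the preceding arguments.
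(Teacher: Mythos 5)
Your proposal proves the implication as literally worded (if $R/I$ is an approximately prime ring, then $I$ is an approximately prime ideal), but the paper's own proof runs in the opposite direction: it begins ``Suppose $I$ is an approximately prime ideal,'' takes cosets with $\bar a\cdot\bar r\cdot\bar b=\bar 0$ for all nonzero $\bar r$, pulls this back to $a\cdot r\cdot b\in\Phi^{*}I$, and concludes that $R/I$ is an approximately prime ring, leaning on the preceding characterization of approximately prime rings via the condition $a\cdot r\cdot b=0$. So there is a statement--proof mismatch in the paper itself; you followed the wording, the paper's argument establishes the converse, and the two proofs are therefore not comparable step for step.

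Beyond that, your argument has a genuine gap at precisely the point you flag, and it is not a removable one given the paper's definitions. To apply primeness of the zero ideal of $R/I$ you need $(a+I)\cdot(b+I)=a\cdot b+I\in\Phi^{*}\bigl((0)_{R/I}\bigr)$, i.e.\ that the coset $a\cdot b+I$ is descriptively near the zero coset. From $a\cdot b\in\Phi^{*}I$ you only know $\Phi(a\cdot b)\in\Phi(I)$ inside $R$; the paper defines no probe map and no descriptive proximity on the coset ring $R/_{\rho}I$ (it only introduces the set $(\Phi^{*}R)/_{\rho}I$), so there is nothing to ``translate through.'' In particular, when $a\cdot b\in\Phi^{*}I\setminus I$ the coset $a\cdot b+I$ is not the zero coset, and no axiom forces it into the upper approximation of the zero ideal; closing this step would require an extra hypothesis (for instance $\Phi^{*}I=I$, or an explicit compatibility rule defining $\Phi$ on cosets) that neither you nor the paper supplies. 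The earlier quotient arguments in the paper only ever use the easy containment $I\subseteq\Phi^{*}I$ (coset equality gives membership in $\Phi^{*}I$), which, as you correctly observe, is the opposite direction. As it stands your proposal does not close, and even if the missing transfer were granted, it would be proving a different implication from the one the paper actually demonstrates.
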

\begin{proof}
  Suppose $I$ is an approximately prime ideal of an approximately ring $R$. Let $\bar a, \bar b \in R/I$,
then $\bar a = a + I$ where $a \in R$ and $\bar b = b+I$ where $b \in R$. Assume $\bar a (R/I) \bar b = (0)$ then $\bar a \cdot  \bar r \cdot  \bar b = \bar 0$, for any non-zero $\bar r \in R/I$. Now we have $(a + I)\cdot (r + I)\cdot (b + I) = I $ where $a,r $ and $b \in R$. Thus, $a\cdot r\cdot b + I = 0 + I,$ so $ a\cdot r\cdot b \in \Phi^{*}(I)$, then $a \in I$ or $b\in I$ since $I$ is an approximately prime ideal, $a \in  I$ or $b \in I$, implies $\bar a = \bar 0$ or $\bar b = \bar 0$. Therefore, $R/I$ is an approximately prime ring.
\end{proof}

\subsection{Approximately direct product}
    Suppose $R_1$ and $R_2$ are approximately rings.  The direct product of $R_1$ and $R_2$ is defined by
	\[R_1 \times R_2 = \{(r_1,r_2 ) \ \ : \ \  r_1 \in R_1  \quad\mbox {and} \quad r_2\in R_2\}.\]
	Here, for $(r_1,r_2)$ and $(\bar r_1, \bar r_2) \in R_1\times R_2$, the addition operation is defined by 
	\[(r_1,r_2 )+(\bar r_1 ,\bar r_2 )= (r_1+ \bar r_1 ,r_2+\bar r_2 )\] 
	where $r_1+ \bar r_1 \in \Phi^{*} R_1$ and $r_2+ \bar r_2 \in \Phi^{*} R_2$.
	Further, the multiplication operation is defined by 
	\[
	(r_1,r_2 )*(\bar r_1 ,\bar r_2 )= (r_1 * \bar r_1 ,r_2 *\bar r_2 ),
	\]
	where $r_1 * \bar r_1 \in \Phi^{*} R_1$ and $r_2 * \bar r_2 \in \Phi^{*} R_2$.\\

\begin{theorem}
  If $R_1 $ and $R_2$ are approximately rings, $\Phi^{*}R_1$, $\Phi^{*} R_2$ are groupoids with the binary operations \enquote{+}  and \enquote{*}. If  $\Phi^{*} (R_1 \times R_2) = \Phi^{*} R_1 \times \Phi^{*}R_2$, then $R_1 \times R_2$ is an approximately ring. 
\end{theorem}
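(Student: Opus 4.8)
The plan is to verify directly the three defining conditions $({\bf AR}_1)$, $({\bf AR}_2)$, $({\bf AR}_3)$ of an approximately ring for $R_1 \times R_2$ equipped with the componentwise operations \enquote{+} and \enquote{*} introduced above, reducing each requirement coordinate by coordinate to the corresponding property already known in $R_1$ and in $R_2$. The single hypothesis that does the real work is $\Phi^{*}(R_1 \times R_2) = \Phi^{*}R_1 \times \Phi^{*}R_2$: it converts the statement ``$(u_1,u_2) \in \Phi^{*}(R_1 \times R_2)$'' into the two separate statements $u_1 \in \Phi^{*}R_1$ and $u_2 \in \Phi^{*}R_2$, and conversely, so that every identity and every closure condition for the product follows from the factors. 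The groupoid assumptions on $\Phi^{*}R_1$ and $\Phi^{*}R_2$ are needed so that \enquote{+} and \enquote{*} are genuinely closed on each $\Phi^{*}R_i$; this makes the three-fold expressions appearing in the associativity and distributivity identities well posed.

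First I would check $({\bf AR}_1)$, that $(R_1\times R_2,+)$ is an abelian approximately group. For closure $({\bf AG}_1)$: given $(r_1,r_2),(\bar r_1,\bar r_2)\in R_1\times R_2$, the sum is $(r_1+\bar r_1,\ r_2+\bar r_2)$, and since each $R_i$ is an approximately group we have $r_i+\bar r_i\in\Phi^{*}R_i$, whence the sum lies in $\Phi^{*}R_1\times\Phi^{*}R_2=\Phi^{*}(R_1\times R_2)$. For $({\bf AG}_2)$, associativity is checked coordinatewise: both bracketings of a triple of elements of $R_1\times R_2$ reduce, in each coordinate, to the associativity identity that holds in $\Phi^{*}R_i$ for elements of $R_i$, and the common value again lands in $\Phi^{*}R_1\times\Phi^{*}R_2$. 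For $({\bf AG}_3)$, if $e_i\in\Phi^{*}R_i$ is the additive identity of $R_i$, then $(e_1,e_2)\in\Phi^{*}(R_1\times R_2)$ and $(r_1,r_2)+(e_1,e_2)=(r_1,r_2)$; for $({\bf AG}_4)$, if $s_i\in R_i$ is the additive inverse of $r_i$, then $(s_1,s_2)\in R_1\times R_2$ inverts $(r_1,r_2)$. Commutativity of \enquote{+} on the product is immediate from commutativity in each factor.

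Next I would check $({\bf AR}_2)$, that $(R_1\times R_2,*)$ is an approximately semigroup: closure and associativity for \enquote{*} are verified exactly as for \enquote{+}, using that each $R_i$ is an approximately semigroup under \enquote{*} and that $\Phi^{*}R_i$ is a groupoid under \enquote{*}. Finally, for $({\bf AR}_3)$ I would verify the two distributive laws. Taking $(x_1,x_2),(y_1,y_2),(z_1,z_2)\in R_1\times R_2$, the left-hand side $(x_1,x_2)*\bigl((y_1,y_2)+(z_1,z_2)\bigr)$ equals $\bigl(x_1*(y_1+z_1),\ x_2*(y_2+z_2)\bigr)$, and applying in each coordinate the distributive law valid in $\Phi^{*}R_i$ rewrites this as $\bigl((x_1*y_1)+(x_1*z_1),\ (x_2*y_2)+(x_2*z_2)\bigr)$, which is precisely $(x_1,x_2)*(y_1,y_2)+(x_1,x_2)*(z_1,z_2)$; the identity holds in $\Phi^{*}R_1\times\Phi^{*}R_2=\Phi^{*}(R_1\times R_2)$. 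The right distributive law is symmetric. This establishes $({\bf AR}_1)$--$({\bf AR}_3)$, so $R_1\times R_2$ is an approximately ring.

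The main point requiring care — and essentially the only one — is the bookkeeping about where each computed element lives. Every componentwise identity produces, a priori, an element of $\Phi^{*}R_1\times\Phi^{*}R_2$, and it is exactly the hypothesis $\Phi^{*}(R_1\times R_2)=\Phi^{*}R_1\times\Phi^{*}R_2$ that upgrades this to membership in $\Phi^{*}(R_1\times R_2)$, which is what the axioms demand; without it the argument does not close. I would also flag that the groupoid assumptions on $\Phi^{*}R_1$ and $\Phi^{*}R_2$ are used precisely to make the associativity and distributivity expressions (which involve operating on already-approximated elements) well defined.
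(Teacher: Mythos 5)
Your proposal is correct and follows essentially the same route as the paper: both verify $({\bf AR}_1)$--$({\bf AR}_3)$ for $R_1\times R_2$ componentwise, using the hypothesis $\Phi^{*}(R_1\times R_2)=\Phi^{*}R_1\times\Phi^{*}R_2$ to transfer each closure and identity condition from the factors to the product. Your write-up merely spells out in detail the coordinatewise checks that the paper's proof states in compressed form.
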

\begin{proof}$\mbox{}$\\
	\begin{description}
	\item[{\bf ($AR_1$)}] Let $\Phi^{*} (R_1 \times R_2) = \Phi^{*} R_1 \times \Phi^{*}R_2$. Since $R_1 $ and $R_2$ are approximately rings, then $(R_1 \times R_2,+)$ is an abelian approximately group.
	\item[{\bf ($AR_2$)}] Since $(R_1,*)$ and $(R_2,*)$ are approximately semi-group, we conclude that $(R_1 \times R_2,*)$ is an approximately semi-group. 
    \item[{\bf ($AR_3$)}] For $(a,b)$, $(c,d)$ and $(m,n)$ are elements in $R_1 \times R_2$, then we can say that $(m,n)*((a,b)+(c,d))=(m,n)*(a,b) + (m,n)*(c,d)$ holds in $\Phi^{*} (R_1 \times R_2)$. 
	In similar way, we can conclude that the right distributive law is also satisfied. 
\end{description}
\end{proof}

\begin{theorem}
The approximately direct product of two approximately rings can never be an approximately prime ring.
\end{theorem}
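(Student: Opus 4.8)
The plan is to apply the characterization of approximately prime rings obtained above: an approximately ring $R$ is prime if and only if, for all $a,b\in R$, the condition that $a\cdot r\cdot b=0$ for every non-zero $r\in R$ forces $a=0$ or $b=0$. So to show that $R_1\times R_2$ is never an approximately prime ring it suffices to exhibit two non-zero elements $a,b\in R_1\times R_2$ with $a\cdot r\cdot b=0$ for all non-zero $r$. Throughout I assume, as is implicit in the statement, that $R_1$ and $R_2$ are both non-zero (if one factor is the trivial ring the ``product'' collapses to the other factor and there is nothing to prove).

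First I would fix a non-zero element $r_1\in R_1$ and a non-zero element $r_2\in R_2$, and set $a=(r_1,0_{R_2})$ and $b=(0_{R_1},r_2)$. Both lie in $R_1\times R_2$, and both are non-zero since their first, respectively second, coordinate is non-zero. Next I would compute, for an arbitrary $r=(s_1,s_2)\in R_1\times R_2$, using the componentwise definition of multiplication on the direct product,
\[
a\cdot r\cdot b=(r_1,0_{R_2})\cdot(s_1,s_2)\cdot(0_{R_1},r_2)=\bigl(r_1\cdot s_1\cdot 0_{R_1},\;0_{R_2}\cdot s_2\cdot r_2\bigr).
\]
Here I would invoke the elementary fact that the additive zero annihilates under multiplication in an approximately ring — $0\cdot x=x\cdot 0=0$ — which follows from the distributive law $({\bf AR}_3)$ holding in $\Phi^{*}R$ exactly as in the classical case. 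Consequently $a\cdot r\cdot b=(0_{R_1},0_{R_2})=0$ for every $r$, in particular for every non-zero $r\in R_1\times R_2$. Since we have produced $a\neq 0$ and $b\neq 0$ in $R_1\times R_2$ with $a\cdot r\cdot b=0$ for all non-zero $r$, the characterization theorem tells us that $R_1\times R_2$ is not an approximately prime ring.

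The only real subtlety — the step I expect to need the most care — is the bookkeeping around the zero element: one must make sure $0_{R_i}$ genuinely belongs to $R_i$ (rather than merely to $\Phi^{*}R_i$) so that $a$ and $b$ are legitimate elements of the direct product, and one must justify $0\cdot x=0$ within the approximately-ring framework; both points are routine but should be recorded explicitly. An essentially equivalent alternative route is to observe that $R_1\times\{0\}$ and $\{0\}\times R_2$ are approximately ideals whose product equals $(0)$, and then invoke the earlier theorem on products of approximately ideals together with the primeness of $(0)$ in an approximately prime ring; this again yields a contradiction with $R_1,R_2\neq 0$.
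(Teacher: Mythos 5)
Your argument is correct and follows essentially the same route as the paper's own proof: exhibit two non-zero elements supported on different factors and conclude via the characterization of approximately prime rings that $a\cdot r\cdot b=0$ for all non-zero $r$ with $a\neq 0$, $b\neq 0$ rules out primeness. The only difference is that you use $a=(r_1,0)$, $b=(0,r_2)$ for arbitrary non-zero $r_1,r_2$ instead of the paper's $(1,0)$, $(0,1)$, which is if anything a slight improvement, since it does not presuppose unity (and avoids the subtlety that $1_{R_i}$ is only guaranteed to lie in $\Phi^{*}R_i$, not necessarily in $R_i$).
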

\begin{proof}
Let $R=R_1 \times R_2$ be an approximately direct product of two approximately rings. In that case $R$ is not an approximately prime ring. For $a = (1,0)$ and $b = (0,1)$ we have  $aRb = (0)$ for all non-zero $(r_1, r_2) \in R$. However, $a \neq 0$ and $b \neq 0$, so $R$ can never be approximately prime ring.
\end{proof}

\begin{corollary}
	An approximately direct product of two approximately rings can never be an approximately integral domain.  
\end{corollary}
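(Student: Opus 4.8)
The plan is to deduce this directly from the immediately preceding theorem, by observing that an approximately integral domain is in particular an approximately prime ring. First I would recall that, for a commutative approximately ring $R$, being approximately prime means precisely that $(0)$ is an approximately prime ideal, and I would check that the condition defining an approximately integral domain forces $(0)$ to be approximately prime: if $a\cdot b \in \Phi^{*}(0)$ and (in the relevant case) $a\cdot b = 0$, then $a=0$ or $b=0$, i.e. $a \in (0)$ or $b \in (0)$. Hence every approximately integral domain is an approximately prime ring, and the theorem that the approximately direct product of two approximately rings can never be an approximately prime ring immediately yields the corollary.

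For a self-contained argument I would instead proceed concretely, exactly as in the proof of the preceding theorem. Set $R = R_1 \times R_2$ and take $a = (1_{R_1}, 0)$ and $b = (0, 1_{R_2})$ in $R$; these are nonzero elements built from the identities of $R_1$ and $R_2$. Then $a\cdot b = (1_{R_1}\cdot 0,\ 0\cdot 1_{R_2}) = (0,0)$, which is the zero element of $R$ and in particular lies in $R \subseteq \Phi^{*}R$, so $a\cdot b \in \Phi^{*}R$ with $a\cdot b = 0$. Since $a \neq 0$ and $b \neq 0$, the defining condition of an approximately integral domain fails, and therefore $R_1 \times R_2$ can never be an approximately integral domain.

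The only point that needs care — and the main obstacle, such as it is — is the passage between the various $\Phi^{*}$-membership conditions. In the descriptive-proximity setting $\Phi^{*}(0)$ may be strictly larger than $\{0\}$, so "$(0)$ is approximately prime" is formally a statement about $a\cdot b \in \Phi^{*}(0)$ rather than about $a\cdot b = 0$; one must make sure the logical direction being used (integral-domain condition $\Rightarrow$ primality of $(0)$, not the converse) is the one actually needed to invoke the earlier theorem. For the concrete argument this difficulty disappears, since there $a\cdot b$ is literally $0$. I would also note, as in the proof of the preceding theorem, that the argument tacitly assumes $R_1$ and $R_2$ possess identities $1_{R_1}$ and $1_{R_2}$, which is what makes the zero-divisors $a$ and $b$ available.
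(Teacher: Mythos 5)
Your second, self-contained argument is the right one and is essentially what the paper intends: the corollary carries no proof of its own and is meant to be read off the same zero-divisor construction used for the preceding theorem, namely $a=(1_{R_1},0)$, $b=(0,1_{R_2})$, $a\cdot b=(0,0)$ with $a\neq 0$, $b\neq 0$, which directly violates the defining condition of an approximately integral domain. Your first route, however, does not actually go through as stated: to invoke the theorem you need ``approximately integral domain $\Rightarrow$ approximately prime ring,'' i.e.\ that the integral-domain condition forces $(0)$ to be an approximately prime ideal; but primality of $(0)$ quantifies over all products $a\cdot b\in\Phi^{*}(0)$, and since $\Phi^{*}(0)$ may strictly contain $\{0\}$, the case $a\cdot b\in\Phi^{*}(0)$ with $a\cdot b\neq 0$ is exactly the one the integral-domain hypothesis says nothing about. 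So the ``relevant case'' you restrict to is not the only case, and that reduction would need an extra hypothesis such as $\Phi^{*}(0)=\{0\}$. Since you fall back on the concrete argument, the proof stands; your caveat that $R_1$ and $R_2$ must have identities available (strictly, $1_{R_i}\in\Phi^{*}R_i$, so one tacitly needs $(1_{R_1},0),(0,1_{R_2})\in R_1\times R_2$) is a gap shared with the paper's own proof of the theorem and is fair to note.
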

		
\begin{proposition}
   Let $R = R_1 \times R_2$ be an approximately direct product of two approximately rings. If $P = R_1 \times \{0\}$ is an approximately prime ideal, then $R_2$ is an approximately integral domain.  
\end{proposition}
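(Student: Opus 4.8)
The plan is to deduce the integral-domain property of $R_2$ straight from the defining property of the approximately prime ideal $P=R_1\times\{0\}$, by lifting a zero product in $R_2$ to a product in $R=R_1\times R_2$. Since $R_2$ is a commutative approximately ring (which, together with non-triviality, the statement implicitly requires for ``approximately integral domain'' to be meaningful), it remains only to verify the ``no zero divisors'' clause. (Had an isomorphism theorem identifying $R/P$ with $R_2$ been available, one could instead quote the earlier theorem that a quotient by an approximately prime ideal is an approximately integral domain; lacking that machinery, the lifting argument below is the self-contained route.)

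To verify that clause, let $a,b\in R_2$ satisfy $a\cdot b\in\Phi^{*}R_2$ and $a\cdot b=0$. Fix any $r_1\in R_1$ (if $R_1$ has an identity, taking $r_1=1_{R_1}$ streamlines the next line) and set $\alpha=(r_1,a)$ and $\beta=(r_1,b)$ in $R$. By the componentwise multiplication on $R_1\times R_2$ one has $\alpha\cdot\beta=(r_1\cdot r_1,\ a\cdot b)=(r_1^{2},0)$. The key step is $(r_1^{2},0)\in\Phi^{*}P$: since $R_1$ is an approximately ring, $r_1^{2}=r_1\cdot r_1\in\Phi^{*}R_1$, so there is $c\in R_1$ with $\Phi(c)=\Phi(r_1^{2})$; then $(c,0)\in P=R_1\times\{0\}$, and because the probe on the product agrees coordinatewise with the probes on the factors, $\Phi\big((r_1^{2},0)\big)=\Phi\big((c,0)\big)\in\Phi(P)$, i.e.\ $\{(r_1^{2},0)\}\ \dnear\ P$, which is exactly $(r_1^{2},0)\in\Phi^{*}P$. (Under the running hypothesis $\Phi^{*}(R_1\times\{0\})=\Phi^{*}R_1\times\Phi^{*}\{0\}$ used for the other direct-product results, this is immediate, since $r_1^{2}\in\Phi^{*}R_1$ and $0\in\Phi^{*}\{0\}$.) Now $P$ is approximately prime and $\alpha\cdot\beta\in\Phi^{*}P$, hence $\alpha\in P$ or $\beta\in P$; since $(r_1,x)\in P$ if and only if $x=0$, this forces $a=0$ or $b=0$. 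Therefore $R_2$ is an approximately integral domain.

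The step I expect to be the main obstacle is precisely $(r_1^{2},0)\in\Phi^{*}P$: it hinges on the compatibility of the probe map on $R_1\times R_2$ with the probes on the two factors --- in effect, that the product probe is (equivalent to) the pair of the factor probes. This is the same tacit assumption that already underlies the direct-product constructions of this section, so in practice it costs nothing, but a careful proof must state it. Secondary points worth flagging: commutativity and non-triviality of $R_2$ must be carried along from the hypotheses on $R_1,R_2$ (or on $R$) so that ``approximately integral domain'' even applies, and the particular choice of $r_1$ is irrelevant because membership of $(r_1,x)$ in $P$ is decided entirely by the second coordinate.
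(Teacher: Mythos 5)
Your proof is correct and follows essentially the same route as the paper: lift the zero product $a\cdot b=0$ in $R_2$ to a product in $R=R_1\times R_2$ whose second coordinates are the two factors, and apply the approximate primality of $P=R_1\times\{0\}$ to force one of them to be $0$. The only real difference is the choice of first coordinate: the paper argues by contradiction with $(0,r_2)$ and $(0,s_2)$, whose product $(0,0)$ already lies in $P\subseteq\Phi^{*}P$, so taking the first coordinate to be $0$ rather than an arbitrary $r_1$ avoids the probe-compatibility step $(r_1^{2},0)\in\Phi^{*}P$ that you correctly identify (and justify under the coordinatewise-probe assumption) as the delicate point of your version.
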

\begin{proof}
  Suppose $R_2$ is not an approximately integral domain. Then there exist non-zero elements $r_2, s_2 \in R_2$ such that $r_2s_2 = 0$. Let $(0, r_2), (0, s_2) \in R$ such that $(0, r_2)(0, s_2) \in \Phi^{*}(P)$. Now we have $(0, r_2) \in P$ or $(0, s_2) \in P$, since $(P)$ is an approximately prime ideal. Thus, $r_2 = 0$ or $s_2 = 0$, but that contradicts with the fact that $r_2$ and $s_2$ are non-zero elements. Therefore, $R_2$ is an approximately integral domain.  
\end{proof}

\end{document}